\newcommand{\Aut}{\mathrm{Aut}}
\newcommand{\Dc}{\mathcal{D}}
\newcommand{\arinj}{\ar@{^{(}->}}
\newcommand{\arsurj}{\ar@{->>}}
\newcommand{\areq}{\ar@{=}}
\newcommand{\wt}{\widetilde}
\newcommand{\bsm}{\begin{smallmatrix}}
\newcommand{\esm}{\end{smallmatrix}}
\newtheorem*{rep@theorem}{\rep@title}
\newcommand{\newreptheorem}[2]{%
\newenvironment{rep#1}[1]{%
 \def\rep@title{#2 \ref{##1}}%
 \begin{rep@theorem}}%
 {\end{rep@theorem}}}
\newcommand\reallywidehat[1]{%
\savestack{\tmpbox}{\stretchto{%
  \scaleto{%
    \scalerel*[\widthof{\ensuremath{#1}}]{\kern-.6pt\bigwedge\kern-.6pt}%
    {\rule[-\textheight/2]{1ex}{\textheight}}
  }{\textheight}%
}{0.5ex}}%
\stackon[1pt]{#1}{\tmpbox}%
}
\begin{document}

\title[Bridgeland stability and Catalan numbers]{A note on Bridgeland stability conditions and Catalan numbers}

\author[Jason Lo]{Jason Lo}
\address{Department of Mathematics \\
California State University, Northridge\\
18111 Nordhoff Street\\
Northridge CA 91330 \\
USA}
\email{jason.lo@csun.edu}

\author[Karissa Wong]{Karissa Wong}
\address{Department of Mechanical Engineering\\
 University of California, Berkeley\\
 6141 Etcheverry Hall\\
  Berkeley CA 94720\\
   USA}
\email{karissawong@berkeley.edu}

\keywords{Bridgeland stability, Catalan numbers, autoequivalence}
\subjclass[2010]{Primary 14J27, 14F07; Secondary: 05E14, 05A15}

\begin{abstract}
In this short note, we describe a problem in algebraic geometry where the solution involves Catalan numbers.  More specifically, we consider the derived category of coherent sheaves on an elliptic surface, and the action of its autoequivalence group on its Bridgeland stability manifold.  In solving an equation involving this group action,  the generating function of Catalan numbers arises, allowing us to use asymptotic estimates of Catalan numbers to arrive at a bound for the  solution set.
\end{abstract}

\maketitle
\tableofcontents

\section{Introduction}

The ubiquity of Catalan numbers is well known and is documented in a great body of literature.  Stanley's book  \cite{stanley2015}, for example, provides a comprehensive survey on Catalan numbers, including 214 different scenarios in which Catalan numbers arise.

This article is motivated by  a problem involving a group action in algebraic geometry.  On the surface, this problem has nothing to do with Catalan numbers.  The solution to this problem  hinges on solving an equation of the form
 \[
   u = (A+Bu^2)w.
 \]
Here, $A$ and $B$ are real constants, and the goal is to solve for $u$ as a power series in $w$, and then compute the radius of convergence of the power series.  The radius of convergence indicates a `boundary' within which  our  problem from algebraic geometry has a solution.  That is, we have a primarily algebraic problem  for which the solution requires analytic techniques.

As it turns out, we can rewrite the above equation in a form that matches a quadratic equation giving rise to the generating function of Catalan numbers.  We then utilise classical asymptotic estimates of Catalan numbers from  combinatorics to  compute the radius of convergence for $u$ as a series in $w$, thus solving our problem.

In Section \ref{sec:context}, we give an informal overview of the context and motivation for our problem from algebraic geometry.  In doing so, we freely use the language of algebraic geometry.  As such, the technical details of Section \ref{sec:context} are intended for the reader who is familiar with the language of algebraic geometry.  The general reader should feel free to skim over Section   \ref{sec:context} just to garner a sense of the deeper side of the story, or to skip Section   \ref{sec:context} altogether and focus on  Section  \ref{sec:Catalan}, in which we describe the mechanics of   how Catalan numbers appear from the above equation.

\section{Context and Motivation}\label{sec:context}

\paragraph[Bridgeland stability and group actions] In algebraic geometry,  the notion of stability conditions on a triangulated  category $\Dc$  was formulated by Bridgeland to understand the stability of Dirichlet branes in string theory \cite{StabTC}.  The group $\Aut (\Dc)$ of autoequivalences - or `symmetries' - of the triangulated category $\Dc$ then acts on the space of Bridgeland stability conditions.  Studying this group action  helps us understand the geometry of the stability manifold itself \cite{SCK3}, which has deep implications in homological mirror symmetry, and helps  clarify  connections between algebraic geometry and other branches of mathematics, including dynamical systems \cite{dimitrov2013dynamical}.

Given any smooth projective surface $X$, the derived category $D^b(X)$ of coherent sheaves on $X$ is a triangulated category.  Arcara and Bertram showed that for any pair of divisors $(\omega, B)$ on $X$ where $\omega$ is ample, there is a corresponding Bridgeland stability condition $\sigma_{\omega,B}$ on $D^b(X)$ \cite{ABL}.  One can informally think of $(\omega,B)$ as a system of coordinates for a subset of the space of Bridgeland stability conditions on $D^b(X)$.

When $X$ is a Weierstra{\ss} elliptic surface in the sense of \cite{FMNT}, there is a natural non-standard autoequivalence of $D^b(X)$, namely the relative Fourier-Mukai transform $\Phi$ that comes from the moduli problem of parametrising rank-one, torsion-free sheaves on fibers of the elliptic fibration.  In \cite{Lo20}, it was shown that when $\omega$ is ``large enough'', the image of $\sigma_{\omega,B}$ under the action of $\Phi$ is again a stability condition of the form $\sigma_{\omega',B'}$ up to a $\widetilde{\mathrm{GL}}^+(2,\mathbb{R})$-action.  The key here is that  $\omega',B'$ can be explicitly computed in terms of $\omega, B$.  Also, the   group  $\widetilde{\mathrm{GL}}^+(2,\mathbb{R})$ is the universal covering space of the subgroup of $\mathrm{GL}(2,\mathbb{R})$ consisting of matrices with positive determinants, and its action on the space of stability conditions is much easier to describe than the action of $\Aut (D^b(X))$.  In other words, given $(\omega,B)$, we can solve the equation
\begin{equation}\label{eq1}
  \Phi \cdot \sigma_{\omega,B} = \sigma_{\omega',B'}\cdot g
\end{equation}
for some $(\omega',B')$ and some $g \in\widetilde{\mathrm{GL}}^+(2,\mathbb{R})$ as long as $\omega$ is ``large enough''.

\paragraph[From Bridgeland stability to power series] \label{para:stabtopower} A particular class of solutions to \eqref{eq1} can be obtained by writing the divisors $\omega,B, \omega',B'$ as $\mathbb{R}$-linear combinations of the fiber class $f$ and the section $\Theta$ that comes with a Weierstra{\ss} fibration.  When the coefficients of $f$ and $\Theta$ in $\omega,\omega'$ are chosen in a way that takes into account the elliptic fibration structure,  the existence of solutions to \eqref{eq1} reduces to the existence of solutions to the equation \cite[(10.4.1)]{Lo20}
\begin{equation}\label{eq2}
 u = ( (m+\alpha-e)-(m-\tfrac{e}{2})u^2)w
\end{equation}
in $u,w$, where $u,w$ are real variables that are assumed to be positive, and $m, \alpha \in \mathbb{R}_{>0}$ are constants that give the `direction' of the ray containing $\omega$ in the ample cone of $X$, while $e$ is the negative of the self-intersection number of $\Theta$.  The plane curve \eqref{eq2} first appeared in the first author's joint work  with Liu and Martinez \cite[(2.7.2)]{LLM}.  Through a change of variable, the condition that $\omega$ is ``large enough''  translates to $w$ being small enough.  By repeatedly applying \eqref{eq2}, we can write $u$ as a formal power series $f(w)$ in $w$ with real coefficients, and when $w$ is within the radius of convergence of the power series, we obtain a solution in $u,w$ to \eqref{eq2} and hence a solution to \eqref{eq1}.  The full details of the connection between solving \eqref{eq1} and solving \eqref{eq2} can be found in \cite[10.1-10.4]{Lo20}.

In short, if we can compute the radius of convergence of the  power series $f(w)$, then we would know how `large' $\omega$ needs to be in order for a solution to \eqref{eq1} to exist.  A natural approach to computing the radius of convergence of $f(w)$ is to  find a formula for its coefficients first, and then apply Hadamard's formula.  In computing the coefficients of $f(w)$, Catalan numbers appear.

\section{Power series and Catalan numbers}\label{sec:Catalan}

\paragraph[Catalan numbers] The \label{para:Catalan} Catalan numbers $c_n$ (where $n \geq 0$) satisfy the recurrence relation
\[
  c_n = \sum_{i+j=n-1}c_ic_j \text{\quad for $n \geq 1$}
\]
and $c_0=1$.  The sequence $\{c_n\}_{n\geq 0}$ starts off with $1, 1, 2, 5, 14, 42, 132, \cdots$.  We will need the following two well-known properties of the Catalan numbers:
\begin{itemize}
    \item[(a)] The generating function of the Catalan numbers $C=\sum_{n=0}^\infty c_nx^n$ solves the quadratic equation
\begin{equation}\label{eq7}
  C=1+xC^2.
\end{equation}
\item[(b)] We have a closed-form formula
\[
 c_n = \frac{1}{n+1}\binom{2n}{n}.
\]
\end{itemize}
For more details of (a) and (b) and their proofs, the reader may refer to  \cite[Example 4, 7.5]{10.5555/562056},  \cite[p.19]{roman2015introduction} or \cite[p.6-7]{flajolet_sedgewick_2009}, for instance.  In particular, given (a),  a standard approach to derive (b) is to apply the quadratic formula to \eqref{eq7} in conjunction with the binomial series $(1+x)^\alpha = \sum_{n=0}^\infty \binom{\alpha}{n} x^n$.

\paragraph[Power series solution] Let us write
\[
  A = m+\alpha-e, \text{\quad} B=-(m-\tfrac{e}{2})
\]
so that \eqref{eq2} can be written in the more compact form
\begin{equation}\label{eq3}
u=(A+Bu^2)w.
\end{equation}

\begin{lem}\label{lem1}
When $A \neq 0$, the power series $u$ that solves \eqref{eq3} is given by
\begin{equation}\label{eq5}
u = \sum_{n=0}^\infty c_n A^{n+1}B^nw^{2n+1}
\end{equation}
where $c_n$ is the $n$-th Catalan number.
\end{lem}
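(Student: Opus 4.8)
The plan is to reduce \eqref{eq3} to the defining quadratic \eqref{eq7} for the Catalan generating function by a single change of variables, and then read off the coefficients. First I would rewrite \eqref{eq3} as $u = Aw + Bwu^2$ and observe that setting $w=0$ forces $u=0$, so the sought solution lies in $w\RR[[w]]$; writing $u = \sum_{k\geq 1}a_kw^k$ and comparing coefficients of $w^k$ on both sides shows that $a_1 = A$ and that each $a_k$ for $k \geq 2$ is a polynomial in $a_1,\dots,a_{k-1}$. Hence the formal power series solution exists and is uniquely determined.

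Since $A \neq 0$, I can then substitute $u = AwC$, equivalently set $C := u/(Aw)$, which is a well-defined element of $\RR[[w]]$ with constant term $a_1/A = 1$. Dividing the relation $u = Aw + Bwu^2$ by $Aw$ turns it into
\[
C = 1 + (ABw^2)C^2,
\]
which is precisely \eqref{eq7} with $x$ replaced by $ABw^2$.

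To finish, let $G(x) = \sum_{n\geq 0}c_nx^n$ be the Catalan generating function, which by property (a) satisfies $G = 1 + xG^2$. Substituting $x = ABw^2$ (a valid composition of formal power series, since $ABw^2$ has zero constant term) gives $G(ABw^2) = 1 + (ABw^2)G(ABw^2)^2$, so $G(ABw^2)$ also solves the equation for $C$ and has constant term $1$. By the uniqueness established above, $C = G(ABw^2)$, whence
\[
u = AwC = Aw\sum_{n=0}^\infty c_n(AB)^nw^{2n} = \sum_{n=0}^\infty c_n A^{n+1}B^n w^{2n+1},
\]
which is \eqref{eq5}.

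The only genuine subtlety, and the part I would be careful to spell out, is the uniqueness of the formal power series solution, since this is what legitimises both the division by $Aw$ and the final identification $C = G(ABw^2)$; everything else is routine coefficient bookkeeping. An alternative route that sidesteps the uniqueness discussion is to substitute the proposed series \eqref{eq5} directly into \eqref{eq3}: collecting the coefficient of $w^{2n+1}$ on the right-hand side produces $A^{n+1}B^n\sum_{i+j=n-1}c_ic_j$ for $n\geq 1$ (and $Aw = c_0Aw$ for $n=0$), and the Catalan recurrence $c_n = \sum_{i+j=n-1}c_ic_j$ then matches it to the left-hand side term by term.
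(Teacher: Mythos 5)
Your proof is correct and follows essentially the same route as the paper: the substitution $C = u/(Aw)$ reducing \eqref{eq3} to the Catalan quadratic $C = 1 + (ABw^2)C^2$ is exactly the paper's argument. The extra care you take with existence and uniqueness of the formal power series solution is a welcome addition that the paper leaves implicit, but it does not change the approach.
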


\begin{proof}
Let us set  $\wt{u}=\tfrac{u}{Aw}$ so that  \eqref{eq3} can be rewritten as
\begin{equation*}
\wt{u} = 1 + (ABw^2)\wt{u}^2.
\end{equation*}
From \ref{para:Catalan}(a), it follows that
\begin{equation*}
\wt{u}=\sum_{n=0}^n c_n(ABw^2)^n
\end{equation*}
where $c_n$ is the $n$-th Catalan number.  The claim \eqref{eq5}   follows immediately.
\end{proof}

In fact, the authors did not recognise the connection between equations of the form \eqref{eq3} and Catalan numbers at first.  It was only after some coefficients of $u$ were computed using a simple program \cite{CatalanCode}, and having those coefficients compared against the OEIS database \cite{OEISCatalan} that the connection became apparent.

\paragraph[Radius of convergence] Using Stirling's formula  $n! \thicksim \sqrt{2\pi n} (\frac{n}{e})^n$, we have the following asymptotic estimate of the Catalan number $c_n$
\begin{equation}\label{eq6}
c_n \thicksim \frac{4^n}{\sqrt{\pi} n^{3/2}}
\end{equation}
\cite[Theorem 3.1]{roman2015introduction}.

\begin{lem}\label{lem2}
When $A,B\neq 0$, the radius of convergence of the power series \eqref{eq5} is $ \frac{1}{2\sqrt{|AB|}}$.
\end{lem}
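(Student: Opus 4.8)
The plan is to apply Hadamard's formula for the radius of convergence directly to the series \eqref{eq5}, using the asymptotic estimate \eqref{eq6} to control the Catalan coefficients. Since \eqref{eq5} contains only odd powers of $w$, I would first record that the coefficient of $w^k$ is $a_k = 0$ when $k$ is even, and $a_{2n+1} = c_n A^{n+1}B^n$ when $k = 2n+1$. By Hadamard's formula, the radius of convergence $R$ satisfies $1/R = \limsup_{k\to\infty} |a_k|^{1/k}$, and because the vanishing even-indexed terms contribute only zeros, this $\limsup$ is governed entirely by the subsequence of odd indices.

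First I would compute, for $k = 2n+1$,
\[
|a_{2n+1}|^{1/(2n+1)} = |c_n|^{1/(2n+1)}\,|A|^{(n+1)/(2n+1)}\,|B|^{n/(2n+1)}.
\]
The exponent ratios $(n+1)/(2n+1)$ and $n/(2n+1)$ both tend to $\tfrac12$ as $n \to \infty$, so the $A$- and $B$-factors converge to $|A|^{1/2}$ and $|B|^{1/2}$ respectively. For the Catalan factor, the estimate \eqref{eq6} gives $c_n^{1/n} \to 4$, whence $c_n^{1/(2n+1)} = (c_n^{1/n})^{n/(2n+1)} \to 4^{1/2} = 2$. Multiplying the three limits yields $\limsup_{n}|a_{2n+1}|^{1/(2n+1)} = 2\sqrt{|AB|}$, and hence $R = \tfrac{1}{2\sqrt{|AB|}}$.

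Alternatively, and perhaps more transparently, I could substitute $v = w^2$ and write $u = Aw\,C(ABv)$, where $C(x) = \sum_n c_n x^n$ is the Catalan generating function from \ref{para:Catalan}(a). Since \eqref{eq6} shows $C$ has radius of convergence $\tfrac14$, the series in $v$ converges precisely when $|ABv| < \tfrac14$, i.e.\ $|v| < \tfrac{1}{4|AB|}$; translating back through $v = w^2$ gives convergence for $|w| < \tfrac{1}{2\sqrt{|AB|}}$, the same bound.

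I do not expect a serious obstacle here: the computation is routine once \eqref{eq6} is in hand. The only point requiring a little care is the lacunary (odd-power-only) structure of \eqref{eq5}: one must confirm that inserting zero coefficients at the even indices does not change the $\limsup$, which is exactly where the hypothesis $A, B \neq 0$ enters, ensuring the odd-index limit $2\sqrt{|AB|}$ is strictly positive and therefore equals the overall $\limsup$.
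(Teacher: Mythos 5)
Your proposal is correct, and your ``alternative'' argument (substituting $v=w^2$, factoring out $Aw$, and using that the Catalan generating function has radius of convergence $\tfrac14$ by \eqref{eq6}) is exactly the route the paper takes, via $u = w\,\overline{u}(w^2)$ and Hadamard's formula applied to $\overline{u}$. Your primary argument, applying Hadamard directly to the lacunary series, is a harmless minor variant and is also fine; the only small imprecision is your closing remark that $A,B\neq 0$ is needed for the odd-index limit to equal the overall $\limsup$ --- interleaving zeros never raises a $\limsup$, so that step works regardless; the hypothesis is really there so that the series is not a polynomial and the stated formula $\tfrac{1}{2\sqrt{|AB|}}$ makes sense.
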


\begin{proof}
We first rewrite \eqref{eq5} as
\[
  u = w\left( \sum_{n=0}^\infty c_n A^{n+1}B^n (w^2)^n\right)
 \]
 and set
 \[
   \overline{u}(w) :=  \sum_{n=0}^\infty c_n A^{n+1}B^nw^n
 \]
 so that $u=w\overline{u}(w^2)$.  By \eqref{eq6} we have
 \[
  |c_nA^{n+1}B^n|^{1/n} \thicksim\frac{4}{(\sqrt{\pi})^{1/n}(n^{1/n})^{3/2}} |AB|\cdot |A|^{1/n}.
 \]
 Since $n^{1/n} \to 1$ as $n \to \infty$, it follows that
 \[
   \lim_{n \to \infty} |c_nA^{n+1}B^n|^{1/n} = 4|AB|,
 \]
 and so by Hadamard's formula \cite[2.4]{ahlfors}, the radius of convergence of $\overline{u}$ as a power series in $w$ is $1/(4|AB|)$.  Hence the radius of convergence of $u$ as a power series in $w$ is $\frac{1}{2\sqrt{|AB|}}$.
 \end{proof}

\paragraph[Back to Bridgeland stability] Combining the statements of Lemmas \ref{lem1} and \ref{lem2}, we have

\begin{prop}\label{prop1}
Assuming $m+\alpha-e$ and $m-\tfrac{e}{2}$ are both nonzero, the equation
\begin{equation*}
 u = ( (m+\alpha-e)-(m-\tfrac{e}{2})u^2)w
\end{equation*}
has a solution in $u$ as a convergent power series in $w$
\begin{equation*}
u = \sum_{n=0}^\infty c_n A^{n+1}B^nw^{2n+1}
\end{equation*}
with radius of convergence $\frac{1}{2\sqrt{|AB|}}$, where $c_n$ is the $n$-th Catalan number, $A=m+\alpha-e$ and $B=-(m-\tfrac{e}{2})$.
\end{prop}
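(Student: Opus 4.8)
The plan is to recognise that Proposition \ref{prop1} is nothing more than Lemmas \ref{lem1} and \ref{lem2} reassembled, once the displayed equation is put into the compact form \eqref{eq3}. First I would set $A = m+\alpha-e$ and $B = -(m-\tfrac{e}{2})$, exactly as in the paragraph preceding Lemma \ref{lem1}. Under these definitions the equation
\[
u = ( (m+\alpha-e)-(m-\tfrac{e}{2})u^2)w
\]
is literally equation \eqref{eq3}, namely $u = (A+Bu^2)w$. The hypothesis of the proposition, that $m+\alpha-e$ and $m-\tfrac{e}{2}$ are both nonzero, is precisely the assertion that $A \neq 0$ and $B \neq 0$, which is what allows me to invoke both lemmas with their stated hypotheses satisfied and without any further case analysis.

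Next I would apply Lemma \ref{lem1}: since $A \neq 0$, it produces the power series solution
\[
u = \sum_{n=0}^\infty c_n A^{n+1}B^nw^{2n+1},
\]
which is exactly the series \eqref{eq5} claimed in the proposition. Then, because $A, B \neq 0$, Lemma \ref{lem2} applies directly and identifies the radius of convergence of this series, regarded as a power series in $w$, as $\frac{1}{2\sqrt{|AB|}}$. Concatenating these two statements yields the full conclusion.

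The only point requiring a moment of care—and what I would flag as the main, if minor, obstacle—is confirming that the series delivered by Lemma \ref{lem1} is a \emph{genuine} solution of \eqref{eq3} on the open disc where it converges, and not merely a formal one. This is immediate in spirit but worth stating: within the radius of convergence supplied by Lemma \ref{lem2}, the series may be substituted into the polynomial relation \eqref{eq3}, and the termwise identity verifying the Catalan recurrence (established inside the proof of Lemma \ref{lem1}) then holds as an identity of convergent series. Hence for every $w$ with $|w| < \frac{1}{2\sqrt{|AB|}}$ the series converges to a value $u$ satisfying the original equation, completing the argument.
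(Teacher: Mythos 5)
Your proposal is correct and matches the paper exactly: the paper derives Proposition \ref{prop1} simply by combining Lemmas \ref{lem1} and \ref{lem2} after the substitution $A=m+\alpha-e$, $B=-(m-\tfrac{e}{2})$. Your extra remark about the formal series being a genuine solution within the radius of convergence is a reasonable point of care, though the paper does not dwell on it.
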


Note that  the constants $m$ and $\alpha$ can always be chosen in \cite{Lo20} so that $A>0$ and $B<0$.  Also, in \cite[10.4]{Lo20}, the ample divisor $\omega$ mentioned in \ref{para:stabtopower} is written as $\beta\wt{\omega}$ for some fixed ample divisor $\wt{\omega}$, where $\beta$ is an analytic function in the variable $v:=1/w$ such that $\beta \thicksim v$ as $v \to \infty$.  When $w$ is within the radius of convergence $\frac{1}{2\sqrt{|AB|}}$, i.e.\ when $v > 2\sqrt{|AB|}$, the ample divisor $\omega$ becomes ``large enough'' and the equation \eqref{eq1} for Bridgeland stability conditions has a solution where $\omega',B'$ could be explicitly computed by following the recipe in \cite[10.1-10.4]{Lo20}.

\bigskip
\noindent
\textbf{Acknowledgments.} The authors would like to thank Mei Lim for valuable discussions.  The first author was partially supported by NSF Grant DMS-2100906.

\bibliography{refs}{}
\bibliographystyle{plain}

\end{document}